%
%

\documentclass[12pt,a4paper]{amsart}
\usepackage{amssymb,enumerate,mathrsfs,stmaryrd}

\hoffset=-2cm \voffset=-2cm \topmargin=10pt \textheight=25.5cm \textwidth=16.6cm

\newtheorem{theorem}{Theorem}
\newtheorem*{lemma}{Lemma}
\newtheorem*{corollary}{Corollary}
\newtheorem*{conjecture}{Conjecture}


\newcommand{\vertbar}{\>|\>}
\newcommand{\set}[2]{\ensuremath{\{ #1 \vertbar #2 \}}}

\DeclareMathOperator{\ad}{\mathsf{ad}}
\DeclareMathOperator{\C}{C}
\DeclareMathOperator{\Der}{Der}
\DeclareMathOperator{\diag}{diag}
\DeclareMathOperator{\gl}{\mathsf{gl}}
\DeclareMathOperator{\Rad}{Rad}
\DeclareMathOperator{\rk}{rk}
\DeclareMathOperator{\Sl}{\mathsf{sl}}
\DeclareMathOperator{\sym}{S}
\DeclareMathOperator{\Z}{Z}

\hyphenation{al-geb-ras se-mi-sim-ple}

\begin{document}

\title{Lie algebras with given properties of subalgebras and elements}
\author{Pasha Zusmanovich}
\address{
Department of Mathematics, Tallinn University of Technology, Ehitajate tee 5, 
Tallinn 19086, Estonia
}
\email{pasha.zusmanovich@ttu.ee}
\date{last revised October 2, 2012}
\thanks{\textsf{arXiv:1105.4284}; Proceedings of the Conference 
``Algebra - Geometry - Mathematical Physics'' (Mulhouse, 2011), 
Springer Proceedings in Mathematics and Statistics, to appear.
}

\maketitle

We study the following classes of Lie algebras: 
anisotropic (i.e., Lie algebras for which each adjoint operator $\ad x$ is 
semisimple),
regular (i.e., Lie algebras in which each nonzero element is regular), 
minimal nonabelian
(i.e., nonabelian Lie algebras all whose proper subalgebras are abelian),
and algebras of depth $2$ (i.e., Lie algebras all whose proper subalgebras are abelian
or minimal nonabelian).

All algebras, Lie and associative, are assumed to be finite-dimensional and 
defined over a fixed field of characteristic zero (though some of the results, 
in a weaker form or under additional restrictions, will hold also in positive 
characteristic). We stress that the base field is not assumed to be 
algebraically closed (all the things considered here are collapsing to 
vacuous trivialities in the case of an algebraically closed base field).

Our notations are standard and largely follow Bourbaki \cite{bourbaki}. 
The symbols $\dotplus$, $\oplus$, and $\inplus$ denote 
direct sum of vector spaces, direct sum of Lie algebras, and 
semidirect sum of Lie algebras (the first summand acting on the second), 
respectively.

\section{Anisotropic algebras}

It is shown in \cite[Propostion 1.2]{farn-anis} that any anisotropic solvable Lie algebra
is abelian. From this and the Levi--Malcev decomposition follows that any anisotropic Lie 
algebra is reductive.

\begin{theorem}\label{th-reduct-anis}
For a reductive Lie algebra $L$ the following are equivalent:
\begin{enumerate}[\upshape (i)]
\item $L$ is anisotropic;
\item all proper subalgebras of $L$ are anisotropic;
\item all proper subalgebras of $L$ are reductive;
\item all $2$-dimensional subalgebras of $L$ are abelian;
\item $L$ does not contain subalgebras isomorphic to $\Sl(2)$.
\end{enumerate}
\end{theorem}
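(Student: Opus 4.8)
The plan is to establish all five conditions as a single cyclic chain of implications, (i)$\Rightarrow$(ii)$\Rightarrow$(iii)$\Rightarrow$(iv)$\Rightarrow$(v)$\Rightarrow$(i), with the last step being the substantial one and the first four essentially formal.

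For (i)$\Rightarrow$(ii): given a proper subalgebra $M \subseteq L$ and $x \in M$, the operator $\ad_M x$ is just the restriction of $\ad_L x$ to the invariant subspace $M$. Over a field of characteristic zero an operator is semisimple precisely when its minimal polynomial is separable, and the minimal polynomial of a restriction divides that of the ambient operator; since a divisor of a squarefree polynomial is squarefree, the restriction of a semisimple operator to an invariant subspace is again semisimple, so $M$ is anisotropic. For (ii)$\Rightarrow$(iii) I would simply invoke the fact quoted just above the theorem: any anisotropic Lie algebra is reductive, so each anisotropic proper subalgebra is in particular reductive. For (iii)$\Rightarrow$(iv), note that a non-abelian two-dimensional Lie algebra is solvable but not abelian, hence not reductive; since $L$ itself is reductive it cannot equal such an algebra, so any non-abelian two-dimensional subalgebra would be a \emph{proper} non-reductive subalgebra, contradicting (iii). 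For (iv)$\Rightarrow$(v), observe that $\Sl(2)$ contains the non-abelian two-dimensional subalgebra spanned by $h$ and $e$ (with $[h,e]=2e$), so a copy of $\Sl(2)$ inside $L$ would violate (iv).

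The heart of the argument, and the step I expect to be the main obstacle, is (v)$\Rightarrow$(i), which I would prove by contraposition: assuming $L$ is not anisotropic, I would produce a copy of $\Sl(2)$. Writing the reductive algebra as $L = \Z(L) \oplus [L,L]$ with $S := [L,L]$ semisimple, every central element contributes zero to each adjoint operator, so if some $\ad_L y$ fails to be semisimple then, splitting $y$ accordingly, the failure already occurs for $\ad_S x$ with $x \in S$. Over a field of characteristic zero the abstract Jordan decomposition is available inside $S$: one writes $x = x_s + x_n$ with $x_s, x_n \in S$, where $\ad x_s$ is semisimple and $\ad x_n$ nilpotent, realizing the Jordan decomposition of the operator $\ad x$. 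Since $\ad x$ is not semisimple, $x_n$ is a \emph{nonzero} nilpotent element of $S$, and the Jacobson--Morozov theorem then embeds $x_n$ into an $\Sl(2)$-triple inside $S \subseteq L$, yielding the forbidden subalgebra.

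The care required, and what makes the cycle close over an arbitrary base field, is to invoke each classical ingredient in its characteristic-zero form that does \emph{not} presuppose algebraic closure: the realization of the Jordan decomposition of $\ad x$ by elements of the semisimple algebra $S$, and the Jacobson--Morozov theorem producing a \emph{split} $\Sl(2)$-triple from a nonzero nilpotent. Both hold over any field of characteristic zero, so no extension of scalars is needed and the subalgebra obtained is genuinely isomorphic to $\Sl(2)$ over the given field, exactly as required by (v).
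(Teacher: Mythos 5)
Your proof is correct and follows essentially the same route as the paper: the same cyclic chain (i)$\Rightarrow$(ii)$\Rightarrow$(iii)$\Rightarrow$(iv)$\Rightarrow$(v)$\Rightarrow$(i), with the first four steps formal and the last obtained by splitting off the center, extracting a nonzero nilpotent element of the semisimple part via the Jordan decomposition, and applying the Jacobson--Morozov theorem. The only difference is expository detail (minimal polynomials for the restriction step, the explicit remark that Jacobson--Morozov holds over any field of characteristic zero), which the paper leaves implicit.
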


\begin{proof}
(i) $\Rightarrow$ (ii). If $S$ is a subalgebra of $L$, then for any $x\in S$,
$\ad_S x$ is a restriction of $\ad_L x$, hence the semisimplicity of the latter
implies the semisimplicity of the former.

(ii) $\Rightarrow$ (iii) follows from the observation above that any 
anisotropic Lie algebra is reductive.

(iii) $\Rightarrow$ (iv) follows from the obvious fact that a $2$-dimensional 
reductive Lie algebra is abelian.

(iv) $\Rightarrow$ (v) follows from the obvious fact that $\Sl(2)$
contains a $2$-dimensional nonabelian subalgebra.

(v) $\Rightarrow$ (i). Write $L$ as a direct sum $L = \mathfrak g \oplus A$,
where $\mathfrak g$ is semisimple and $A$ is abelian.
Suppose $\mathfrak g$ is not anisotropic. As $\mathfrak g$ contains semisimple 
and nilpotent components of each of its elements 
(\cite[Chapter I, \S 6, Theorem 3]{bourbaki}), $\mathfrak g$ contains a nonzero
nilpotent element, and by the Jacobson--Morozov theorem 
(\cite[Chapter VIII, \S 11, Proposition 2]{bourbaki}) $\mathfrak g$ contains 
$\Sl(2)$ as a subalgebra, a contradiction. Hence $\mathfrak g$ is anisotropic
and $L$ is anisotropic.
\end{proof}

Though the proof is elementary, and all the necessary ingredients are contained
in \cite{farn-anis} anyway (in particular, the implication 
(i) $\Rightarrow$ (iv) is noted in \cite[\S 1]{farn-anis}, and 
the equivalence (i) $\Leftrightarrow$ (v) in the case of semisimple $L$
is proved, with a slightly different argument, 
in \cite[Theorem 2.1]{farn-anis}), we find this explicit formulation of 
Theorem \ref{th-reduct-anis} interesting enough. There are many works in the 
literature devoted to study of \emph{minimal non-$\mathscr P$} Lie algebras,
i.e. Lie algebras not satisfying $\mathscr P$ and such that all their proper
subalgebras satisfy $\mathscr P$, where $\mathscr P$ is a certain ``natural''
property of Lie algebras (abelianity, nilpotency, solvability, simplicity, 
modularity of the lattice of subalgebras, \dots). In all the cases studied 
so far, the class of minimal non-$\mathscr P$ algebras turns out to be highly 
nontrivial (without further assumptions about the base field, such as 
algebraic or quadratic closedness, triviality of the Brauer group, etc.), with
lot of simple objects. To the contrary, from the Levi--Malcev decomposition and
Theorem \ref{th-reduct-anis} it follows that the class of minimal nonanisotropic
Lie algebras is relatively trivial: those are exactly solvable minimal 
nonabelian Lie algebras. One may ask a ``philosophical'' question: what makes 
the condition of being anisotropic different in that regard from other 
conditions? Where is a borderline for a property $\mathscr P$ which makes the 
class of minimal non-$\mathscr P$ Lie algebras small and ``simple'' (or even 
empty)?

\begin{corollary}
A simple Lie algebra all whose proper subalgebras are not simple, is either minimal
nonabelian, or isomorphic to $\Sl(2)$.
\end{corollary}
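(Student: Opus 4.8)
The plan is to exploit the dichotomy already packaged in Theorem~\ref{th-reduct-anis}. Since a simple Lie algebra is in particular reductive, the equivalence (i) $\Leftrightarrow$ (v) applies to $L$: either $L$ is anisotropic, or it contains a subalgebra isomorphic to $\Sl(2)$. I would dispose of the second alternative at once. If $L$ contains a copy of $\Sl(2)$, that copy is a simple subalgebra; by hypothesis no \emph{proper} subalgebra of $L$ is simple, so the copy cannot be proper and must coincide with $L$, giving $L \cong \Sl(2)$.

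It then remains to show that in the anisotropic case $L$ is minimal nonabelian, i.e.\ that every proper subalgebra $S$ is abelian (that $L$ itself is nonabelian is automatic, as $L$ is simple). Here I would invoke implication (iii) of the theorem: every proper subalgebra of the anisotropic reductive algebra $L$ is reductive, so $S$ decomposes as $S = [S,S] \oplus \Z(S)$ with $[S,S]$ semisimple and $\Z(S)$ central. The key step is to rule out $[S,S] \neq 0$. Since $[S,S] \subseteq S \subsetneq L$, the semisimple part is itself a \emph{proper} subalgebra of $L$; and a nonzero semisimple Lie algebra is either simple, or a direct sum of at least two simple ideals each of which is again a proper subalgebra of $L$. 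In either case $L$ would acquire a proper simple subalgebra, contradicting the hypothesis. Hence $[S,S] = 0$, so $S = \Z(S)$ is abelian, and $L$ is minimal nonabelian.

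The argument is short precisely because the heavy lifting is done by Theorem~\ref{th-reduct-anis}; the only place demanding slight care is the anisotropic case, where one must notice that the semisimple part of a proper reductive subalgebra is again a proper subalgebra and is therefore, by the non-simplicity assumption, forced to vanish. I expect this bookkeeping --- rather than any genuine difficulty --- to be the main thing to get right.
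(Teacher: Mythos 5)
Your proposal is correct and follows essentially the same route as the paper: both arguments reduce everything to Theorem \ref{th-reduct-anis} (the paper quotes (v) $\Rightarrow$ (iii), you use the equivalent (i) $\Leftrightarrow$ (v) plus (i) $\Rightarrow$ (iii)) and then observe that a nonabelian reductive proper subalgebra would force a proper simple subalgebra. You merely spell out two steps the paper leaves implicit, namely that a copy of $\Sl(2)$ cannot be proper and that the semisimple part of a proper reductive subalgebra yields a proper simple subalgebra.
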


\begin{proof}
Let $L$ be a reductive Lie algebra all whose proper subalgebras are not simple.
By implication (v) $\Rightarrow$ (iii) of Theorem \ref{th-reduct-anis}, either $L$ is
isomorphic to $\Sl(2)$, or all proper subalgebras of $L$ are 
reductive. As any nonabelian reductive Lie algebra contains a simple subalgebra,
in the latter case all proper subalgebras of $L$ are abelian.
\end{proof}

In \cite[Theorem 2.2]{towers-agg} a statement similar to the corollary is proved about 
simple Lie algebras, all whose proper subalgebras are supersolvable.

\begin{theorem}\label{th-class-anis}
Let $\mathscr L$ be a nonempty class of Lie algebras satisfying the following properties:
\begin{enumerate}[\upshape (i)]
\item $\mathscr L$ is closed with respect to subalgebras;
\item 
if each proper subalgebra of a reductive Lie algebra $L$ belongs to $\mathscr L$,
then $L$ belongs to $\mathscr L$;
\item solvable Lie algebras belonging to $\mathscr L$ are abelian.
\end{enumerate}
Then $\mathscr L$ is the class of all anisotropic Lie algebras.
\end{theorem}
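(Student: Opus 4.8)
The plan is to prove the two inclusions $\mathscr A \subseteq \mathscr L$ and $\mathscr L \subseteq \mathscr A$ separately, where $\mathscr A$ denotes the class of all anisotropic Lie algebras; together they give $\mathscr L = \mathscr A$. Throughout I would lean on Theorem~\ref{th-reduct-anis} together with the two facts recalled before it: that an anisotropic Lie algebra is reductive, and that a subalgebra of an anisotropic algebra is again anisotropic (the restriction-of-a-semisimple-operator argument used in (i)~$\Rightarrow$~(ii)).

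For $\mathscr A \subseteq \mathscr L$ I would induct on $\dim L$. Since $\mathscr L$ is nonempty and closed under subalgebras by (i), the zero algebra lies in $\mathscr L$, which settles the base case. For the inductive step, let $L$ be anisotropic of positive dimension; it is reductive, and each of its proper subalgebras is anisotropic of strictly smaller dimension, hence in $\mathscr L$ by the induction hypothesis. Property (ii) then places $L$ itself in $\mathscr L$.

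For the reverse inclusion $\mathscr L \subseteq \mathscr A$, fix $L \in \mathscr L$. By (i) every subalgebra of $L$ again lies in $\mathscr L$, so by (iii) every solvable subalgebra of $L$ is abelian. I would extract two consequences. First, $L$ cannot contain $\Sl(2)$, since $\Sl(2)$ has a $2$-dimensional nonabelian solvable subalgebra, which would violate the previous sentence. Second, $L$ is reductive, which is the crux. Writing $L = \mathfrak g \inplus R$ with $R = \Rad L$, the radical $R$ is solvable and in $\mathscr L$, hence abelian. Then for each $g \in \mathfrak g$ the subspace $\mathbb{k} g + R$ is already a subalgebra (as $R$ is an abelian ideal) and is metabelian, hence solvable, hence abelian by (iii); therefore $[g, R] = 0$. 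Since $g$ was arbitrary, $[\mathfrak g, R] = 0$, so $R$ is central and $L = \mathfrak g \oplus R$ is reductive. At this point Theorem~\ref{th-reduct-anis}, implication (v)~$\Rightarrow$~(i), shows that $L$ is anisotropic.

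I expect the main obstacle to be establishing reductivity of $L$: one cannot simply pass to the algebraic closure, since anisotropy is not preserved under scalar extension, so the argument must stay over the ground field. The metabelian-subalgebra trick $\mathbb{k} g + R$ is precisely what keeps everything over the base field and resolves this step cleanly, after which Theorem~\ref{th-reduct-anis} does the rest.
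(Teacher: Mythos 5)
Your proof is correct and follows essentially the same route as the paper: both directions are handled by combining the Levi--Malcev decomposition with condition (iii) to get reductivity, then invoking Theorem~\ref{th-reduct-anis} for anisotropy, and using (ii) via induction/minimal counterexample for the reverse inclusion. The only (harmless) deviations are that you use implication (v)~$\Rightarrow$~(i) of Theorem~\ref{th-reduct-anis} where the paper uses (iii)~$\Rightarrow$~(i), and your $\mathbb{k}g+R$ argument usefully spells out the reductivity step that the paper leaves terse.
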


\begin{proof}
Any class of Lie algebras satisfying conditions (i) and (iii) consists of
anisotropic algebras. Indeed, from the Levi--Malcev decomposition and 
condition (iii) it follows that any algebra in $\mathscr L$ is reductive. 
Then from implication (iii) $\Rightarrow$ (i) of Theorem \ref{th-reduct-anis} 
and condition (i), it follows that any algebra in $\mathscr L$ is anisotropic.

Now, suppose that there is an anisotropic Lie algebra not belonging to 
$\mathscr L$, and consider such algebra $L$ of the minimal possible dimension. 
Then all proper subalgebras of $L$ belong to $\mathscr L$, and by condition (ii)
$L$ itself belongs to $\mathscr L$, a contradiction.
\end{proof}

\section{Regular algebras}\label{sec-regular}

If $N$ is a nilpotent subalgebra of a Lie algebra $L$, by $L^0(N)$ is denoted
the Fitting $0$-component with respect to the $N$-action on $L$ (i.e., the 
set of all elements of $L$ on which $N$ acts nilpotently).

Recall (\cite[Chapter VII, \S 2.2]{bourbaki}) that \emph{rank} $\rk L$ of a 
Lie algebra $L$ is the minimal possible non-vanishing power of the 
characteristic polynomial of $\ad x$, $x\in L$, and elements of $L$ for which
this minimal number is attained are called \emph{regular}. 
Another characterization of $x\in L$ to be a regular element is the equality 
$\dim L^0(x) = \rk L$.

If each nonzero element of $L$ is regular, then $L$ itself is called 
\emph{regular}.

It is clear that any nilpotent Lie algebra is regular, with rank equal the 
dimension of the algebra. If a regular Lie algebra $L$ is not
semisimple, i.e., contains a nonzero abelian ideal $I$, then for any $x\in I$,
$(\ad x)^2 = 0$, hence each element in $L$ is nilpotent, and by the Engel theorem
$L$ is nilpotent.
It is clear also that a regular semisimple Lie algebra is simple 
(see \cite[Chapter VII, \S 2.2, Proposition 7]{bourbaki}),
and that a regular simple Lie algebra is anisotropic 
(see \cite[Chapter VII, \S 2.4, Corollary 2]{bourbaki}). 

\begin{theorem}\label{th-regular}
For a simple Lie algebra $L$ the following are equivalent:
\begin{enumerate}[\upshape (i)]
\item $L$ is regular;
\item all proper subalgebras of $L$ are regular;
\item all proper subalgebras of $L$ are either simple, or abelian.
\end{enumerate}
\end{theorem}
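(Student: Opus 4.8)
The plan is to prove the cycle (i) $\Rightarrow$ (ii) $\Rightarrow$ (iii) $\Rightarrow$ (i), leaning throughout on two facts recorded just before the statement: that a regular Lie algebra is either simple or nilpotent (if it is not semisimple it is nilpotent, and a regular semisimple algebra is simple), and that a regular simple Lie algebra is anisotropic, so that Theorem \ref{th-reduct-anis} becomes available. First I would record the one-line computation that a $2$-dimensional nonabelian Lie algebra is \emph{not} regular: in a basis $\langle x,y\rangle$ with $[x,y]=y$ the characteristic polynomial of $\ad(ax+by)$ is $t(t-a)$, so $\rk=1$, yet $(\ad y)^2=0$ gives $\dim L^0(y)=2$, so $y$ is not regular. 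Since a simple $L$ has $\dim L\ge 3$, every $2$-dimensional subalgebra is proper; hence (ii) forces all of them to be abelian, whence $L$ is anisotropic by implication (iv) $\Rightarrow$ (i) of Theorem \ref{th-reduct-anis}. Dually, if a simple $L$ is not anisotropic it contains $\Sl(2)$, and a $2$-dimensional nonabelian (Borel) subalgebra of that $\Sl(2)$ is a proper subalgebra of $L$ that is neither simple nor abelian, so (iii) likewise forces $L$ anisotropic.

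The two implications into (iii) are then short. For (ii) $\Rightarrow$ (iii): by the above $L$ is anisotropic, so every proper subalgebra $S$ is reductive by Theorem \ref{th-reduct-anis}; being regular it is simple or nilpotent, and a nilpotent reductive algebra is abelian. For (iii) $\Rightarrow$ (i): $L$ is anisotropic, so every element is semisimple and $L^0(x)=\C_L(x)$. For nonzero $x$ the centralizer $\C_L(x)$ is a proper subalgebra (as $\Z(L)=0$) containing $x$ in its centre, so it cannot be simple, and by (iii) it is abelian; being an abelian subalgebra of semisimple elements it is toral, hence sits in a Cartan subalgebra and satisfies $\dim\C_L(x)\le\rk L$. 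As always $\dim L^0(x)\ge\rk L$, we get $\dim\C_L(x)=\rk L$, i.e.\ $x$ is regular, so $L$ is regular.

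To close the cycle I would prove (i) $\Rightarrow$ (ii), first upgrading (i) to (iii). When $L$ is regular (hence anisotropic) the computation above shows that for every nonzero $x$ the centralizer $\C_L(x)$ is a Cartan subalgebra $H_x$ (abelian, of dimension $\rk L$, and equal to every Cartan containing $x$); in particular distinct Cartan subalgebras intersect trivially. If a proper subalgebra $S$ were neither abelian nor simple, then, being reductive, it would split as $S=S_1\oplus S_2$ with $S_1,S_2$ nonzero commuting ideals and $S_1$ nonabelian; picking $x_1,x_2\in S_1$ with $[x_1,x_2]\ne 0$ forces $H_{x_1}\ne H_{x_2}$, whereas $S_2\subseteq\C_L(x_1)\cap\C_L(x_2)=H_{x_1}\cap H_{x_2}=0$, a contradiction. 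Thus (i) $\Rightarrow$ (iii). Finally an abelian proper subalgebra is nilpotent, hence regular; and a simple proper subalgebra $S$ again satisfies (iii) (its proper subalgebras are proper in $L$), so the already-proved implication (iii) $\Rightarrow$ (i) applied to $S$ makes $S$ regular. Hence every proper subalgebra is regular.

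The hard part will be the structure theory underlying (i) $\Rightarrow$ (iii): one must know that Cartan subalgebras of an anisotropic reductive algebra are abelian maximal toral subalgebras of dimension $\rk L$, identify each centralizer $\C_L(x)$ with such a Cartan subalgebra, and then exploit the trivial-intersection property to rule out decomposable reductive proper subalgebras. The reductions to anisotropy, by contrast, should be routine once the non-regularity of the $2$-dimensional nonabelian algebra and the presence of its Borel inside a split $\Sl(2)$ are in hand.
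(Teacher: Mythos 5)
Your proof is correct, and two of its three legs coincide with the paper's: the implication (ii) $\Rightarrow$ (iii) (reduce to anisotropy via the non-regularity of the $2$-dimensional nonabelian algebra and Theorem \ref{th-reduct-anis}, then use ``regular $=$ simple or nilpotent'' plus reductivity), and the implication (iii) $\Rightarrow$ (i) (show $\C_L(x)=L^0(x)$ is an abelian Cartan subalgebra of dimension $\rk L$) are essentially the paper's arguments, with one cosmetic difference: where you invoke the fact that a toral subalgebra embeds in a Cartan subalgebra to bound $\dim\C_L(x)$ from above, the paper instead observes that $\C_L(x)$ equals its own centralizer and applies the characterization of Cartan subalgebras as nilpotent subalgebras $N$ with $L^0(N)=N$; both are legitimate pieces of standard Cartan theory. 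The genuine divergence is in (i) $\Rightarrow$ (ii). The paper disposes of it in one line by citing Bourbaki's proposition that a regular element of $L$ lying in a subalgebra $S$ is regular in $S$. You instead prove (i) $\Rightarrow$ (iii) directly: centralizers of nonzero elements are Cartan subalgebras, distinct Cartan subalgebras meet trivially, and this kills any decomposable reductive proper subalgebra $S_1\oplus S_2$ with $S_1$ nonabelian; you then bootstrap (ii) by applying your own (iii) $\Rightarrow$ (i) to each simple proper subalgebra. This is longer but self-contained modulo standard facts about Cartan subalgebras (that $L^0(x)$ is a Cartan subalgebra for $x$ regular, and that all Cartan subalgebras have dimension $\rk L$ in characteristic zero), and the trivial-intersection observation is a nice structural byproduct; the paper's route buys brevity at the cost of one more external citation.
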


\begin{proof}
(i) $\Rightarrow$ (ii) follows from the fact that if $S$ is a subalgebra of $L$,
and $x\in S$ is a regular element in $L$, then $x$ is a regular element in $S$ 
(\cite[Chapter VII, \S 2.2, Proposition 9]{bourbaki}).

(ii) $\Rightarrow$ (iii). By the observation above, any proper subalgebra of 
$L$ is either simple, or nilpotent. 
Hence $L$ does not contain a $2$-dimensional nonabelian Lie algebra,
and by implication (iv) $\Rightarrow$ (iii) of Theorem \ref{th-reduct-anis},
all proper subalgebras of $L$ are reductive, and all its nilpotent subalgebras 
are abelian.

(iii) $\Rightarrow$ (i). By implication (iii) $\Rightarrow$ (i) of 
Theorem \ref{th-reduct-anis}, $L$ is anisotropic. In any Lie algebra, Cartan 
subalgebras are exactly nilpotent subalgebras $N$ such that $L^0(N) = N$ 
(\cite[Chapter VII, \S 2.1, Proposition 4]{bourbaki}). But nilpotent subalgebras 
of $L$ are abelian, and $L^0(N)$ coincides with the centralizer of $N$, so 
Cartan subalgebras of $L$ are exactly abelian subalgebras coinciding with their
own centralizer. For an arbitrary nonzero element $x\in L$, its centralizer 
$\Z_L(x)$ cannot be simple, hence it is abelian. But, obviously, $\Z_L(x)$ 
coincides with its own centralizer, hence $\Z_L(x)$ is a Cartan subalgebra of 
$L$, $\dim \Z_L(x) = \dim L^0(x) = \rk L$, and $x$ is regular.
\end{proof}

Note that similar to the anisotropic case, minimal nonregular Lie algebras are
exactly solvable minimal nonnilpotent Lie algebras.

\begin{theorem}
Let $\mathscr L$ be a nonempty class of Lie algebras satisfying the following properties:
\begin{enumerate}[\upshape (i)]
\item $\mathscr L$ is closed with respect to subalgebras;
\item 
if each proper subalgebra of a Lie algebra $L$ belongs to $\mathscr L$, then
$L$ belongs to $\mathscr L$;
\item 
non-semisimple Lie algebras belonging to $\mathscr L$ are nilpotent.
\end{enumerate}
Then $\mathscr L$ is the class of all regular Lie algebras.
\end{theorem}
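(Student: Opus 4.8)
The plan is to establish the two inclusions separately, in direct parallel with the second part of the proof of Theorem \ref{th-class-anis}. Write $\mathscr R$ for the class of all regular Lie algebras; the goal is $\mathscr L = \mathscr R$. For the inclusion $\mathscr R \subseteq \mathscr L$ I would proceed by minimal counterexample exactly as there. First recall that every regular algebra is either nilpotent or simple (a regular non-semisimple algebra is nilpotent and a regular semisimple algebra is simple, as noted before Theorem \ref{th-regular}), so that $\mathscr R$ is closed under subalgebras: subalgebras of nilpotent algebras are nilpotent, and proper subalgebras of a regular simple algebra are regular by implication (i) $\Rightarrow$ (ii) of Theorem \ref{th-regular}. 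If some regular algebra lay outside $\mathscr L$, pick one, $L$, of minimal dimension; then all proper subalgebras of $L$ are regular and, being of smaller dimension, lie in $\mathscr L$ by minimality, so condition (ii) forces $L \in \mathscr L$, a contradiction.

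The substantial direction is $\mathscr L \subseteq \mathscr R$, i.e.\ that every $L \in \mathscr L$ is regular. By condition (i) every subalgebra of $L$ again lies in $\mathscr L$, and by condition (iii) every non-semisimple member of $\mathscr L$ is nilpotent; hence every subalgebra of $L$ is semisimple or nilpotent. If $L$ is itself non-semisimple it is nilpotent, hence regular, so I may assume $L$ is semisimple. Since a $2$-dimensional nonabelian algebra is solvable but not nilpotent --- thus neither semisimple nor nilpotent --- $L$ has no such subalgebra, and Theorem \ref{th-reduct-anis} (applied to the reductive algebra $L$ via (iv) $\Rightarrow$ (i)) shows $L$ to be anisotropic; consequently every nilpotent subalgebra $N$ of $L$ is abelian, because for $x \in N$ the operator $\ad_N x$ is both nilpotent and semisimple, hence zero. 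Therefore every proper subalgebra of $L$ is semisimple or abelian. To upgrade ``semisimple'' to ``simple'', and to see that $L$ is simple, I would use the construction: if a semisimple algebra $S = S_1 \oplus \dots \oplus S_k$ has $k \geq 2$, then $S_1 \oplus \mathfrak h$, with $\mathfrak h$ a Cartan subalgebra of $S_2 \oplus \dots \oplus S_k$, is neither semisimple (it has nonzero centre $\mathfrak h$) nor nilpotent (it contains $S_1$). Applied to $S = L$ or to any proper semisimple subalgebra $S$, this yields a proper subalgebra of $L$ lying in $\mathscr L$ by (i) yet violating condition (iii). Hence $L$ is simple and all its proper subalgebras are simple or abelian, so $L$ is regular by implication (iii) $\Rightarrow$ (i) of Theorem \ref{th-regular}.

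The main obstacle is the semisimple case of $\mathscr L \subseteq \mathscr R$: conditions (i) and (iii) deliver only the crude dichotomy ``every subalgebra is semisimple or nilpotent'', and the real work is refining this to the precise hypothesis of Theorem \ref{th-regular}. The two refinements --- passing from ``nilpotent'' to ``abelian'', which is exactly where anisotropy (and hence Theorem \ref{th-reduct-anis}) enters, and passing from ``semisimple'' to ``simple'' via the Cartan-subalgebra construction producing a forbidden non-semisimple non-nilpotent subalgebra --- are the crux; the remainder is the minimal-counterexample bookkeeping already present in Theorem \ref{th-class-anis}.
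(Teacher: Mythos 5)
Your proof is correct and follows essentially the same route as the paper: reduce to the case of a simple $L$ by excluding non-nilpotent non-semisimple subalgebras, rule out $2$-dimensional nonabelian subalgebras so that Theorem \ref{th-reduct-anis} applies, conclude that every proper subalgebra is simple or abelian, invoke implication (iii) $\Rightarrow$ (i) of Theorem \ref{th-regular}, and handle the reverse inclusion by a minimal counterexample using condition (ii). The only deviations are cosmetic: the paper splits off a direct summand using a one-dimensional subalgebra $\mathfrak g_1 \oplus Kx$ where you use a Cartan subalgebra, and it deduces abelianity of nilpotent subalgebras from reductivity (via (iv) $\Rightarrow$ (iii)) rather than from your direct semisimple-plus-nilpotent-operator argument.
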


\begin{proof}
Any class of Lie algebras satisfying conditions (i) and (iii) consists of 
regular algebras. Indeed, from the Levi--Malcev decomposition and 
condition (iii) it follows that any algebra $L$ in $\mathscr L$ is either 
semisimple, or nilpotent. In the former case, write 
$L = \mathfrak g_1 \oplus \dots \oplus \mathfrak g_n$ as the direct sum of simple components. 
If $n>1$, by condition (i) the subalgebra of $L$ of the form $\mathfrak g_1 \oplus Kx$,
where $x$ is an arbitrary nonzero element of $\mathfrak g_2$, belongs to $\mathscr L$,
and by condition (iii) it is nilpotent, a contradiction. Hence $n=1$, that is,
$L$ is simple. By conditions (i) and (iii) $L$ does not contain $2$-dimensional
nonabelian subalgebra, and by implication 
(iv) $\Rightarrow$ (iii) of Theorem \ref{th-reduct-anis}, all subalgebras of $L$
are reductive. This, together with conditions (i) and (iii) again, implies that
all subalgebras of $L$ are either simple, or abelian, and by implication
(iii) $\Rightarrow$ (i) of Theorem \ref{th-regular}, $L$ is regular.

Now, the same elementary reasoning utilizing condition (ii) as at the end of 
the proof of Theorem \ref{th-class-anis}, shows that any regular Lie algebra 
belongs to $\mathscr L$.
\end{proof}

\section{Minimal nonabelian algebras}

It follows from the Levi--Malcev decomposition that any minimal nonabelian Lie 
algebra is either simple, or solvable. Solvable minimal nonabelian Lie algebras 
(even in a slightly more general minimal nonnilpotent setting) 
were described in \cite{stitzinger}, \cite{gkm}, and \cite{towers-laa}.
A simple minimal nonabelian Lie algebra is regular.
Simple minimal nonabelian Lie algebras were studied in \cite{farn-mna} and 
\cite{gein}, but their full description remains an open problem.

Recall that an algebra is called \emph{central} if its centroid coincides
with the base field. For simple algebras this is equivalent to the condition 
that the algebra remains simple under extension of the base field.

\begin{theorem}\label{th-b-d}
There are no central simple minimal nonabelian Lie algebras of types 
$\mathsf B_l$ ($l\ge 2$), $\mathsf C_l$ ($l\ge 3$, $l \ne 2^k$), 
$\mathsf D_l$ ($l\ge 5$, $l\ne 2^k$), $\mathsf G_2$, and $\mathsf F_4$.
\end{theorem}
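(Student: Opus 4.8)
The plan is to show that each of these simple Lie algebras contains a proper nonabelian subalgebra, so that it cannot be minimal nonabelian. Since the question is about \emph{central} simple algebras, I may work after extension to the algebraic closure when that is convenient: a central simple Lie algebra remains simple over the closure, and if the split form over the closure contains a proper nonabelian subalgebra that is defined over the base field, we are done. The cleanest uniform strategy is to exhibit, inside each of the listed types, a proper \emph{reductive} subalgebra of positive semisimple rank (for instance a Levi subalgebra coming from a subdiagram of the Dynkin diagram, or the fixed-point subalgebra of a diagram or inner automorphism); any such subalgebra is nonabelian and proper, which contradicts minimal nonabelianness.

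\medskip

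First I would recall the combinatorial mechanism behind minimal nonabelianness for split simple algebras: the maximal reductive (regular) subalgebras correspond, via Borel--de Siebenthal, to proper subsets of the nodes of the extended Dynkin diagram, and the maximal reductive subalgebras of maximal rank are obtained by deleting one node from the extended diagram. For a minimal nonabelian algebra \emph{every} such proper reductive subalgebra would have to be abelian, forcing every proper subdiagram of the extended Dynkin diagram to carry no edges. I would then read off, type by type, that the extended Dynkin diagrams of $\mathsf B_l$ ($l\ge 2$), $\mathsf C_l$, $\mathsf D_l$, $\mathsf G_2$, and $\mathsf F_4$ all contain proper connected subdiagrams with at least one edge (e.g. an $\mathsf A_2$ or $\mathsf B_2$ node-pattern), which produces a proper nonabelian subalgebra. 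The numerical side conditions ($l\ne 2^k$ for $\mathsf C_l,\mathsf D_l$, and the rank bounds) are exactly what is needed to guarantee that the subdiagram one extracts sits inside a \emph{proper} subset of nodes, i.e. that the resulting regular subalgebra is a genuinely proper subalgebra rather than the whole algebra; for the exceptional types $\mathsf G_2$ and $\mathsf F_4$ no side condition is needed because there is always room.

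\medskip

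To make this descend to the (possibly non-closed) base field $K$, I would choose the proper subalgebra so that it is already defined over $K$. This is automatic for subalgebras coming from deleting a node of the diagram, because the Chevalley generators and the corresponding root subsystems are $\mathbb{Z}$-structures and hence defined over the prime field; more robustly, one can use the subalgebra generated by a semisimple element $x\in L$ together with a suitable root vector, or the centralizer $\Z_L(x)$ of a well-chosen semisimple $x$, which by the anisotropy analysis in Section~1 is reductive and of full rank. Concretely, it suffices to produce one semisimple $x\in L(K)$ whose centralizer $\Z_L(x)$ is a proper subalgebra of rank $\ge 1$ and semisimple rank $\ge 1$: such a centralizer is then a proper nonabelian subalgebra over $K$, contradicting minimal nonabelianness.

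\medskip

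The main obstacle I anticipate is precisely the descent and the role of the arithmetic conditions $l\ne 2^k$. Over an arbitrary field a split reductive subalgebra of maximal rank need not contain a \emph{proper} nonabelian $K$-subalgebra unless one is careful: the proper semisimple components that arise from the Borel--de Siebenthal subdiagrams may be permuted by the Galois action, and one must check that after taking Galois-fixed points a nonabelian subalgebra survives over $K$. The exponents $2^k$ are symptomatic of the places where the only available reductive subalgebra degenerates (for $\mathsf C_{2^k}$ and $\mathsf D_{2^k}$ the relevant subdiagram either fails to be proper or collapses to a torus), so the delicate part of the proof is to verify that outside these exceptional ranks a proper nonabelian $K$-subalgebra genuinely exists, while making no claim in the excluded $2^k$ cases. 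I would therefore organise the argument as a case analysis by type, in each non-excluded case pinning down an explicit semisimple element and its centralizer defined over $K$, and leave the $2^k$ cases aside as being exactly the gap the theorem does not cover.
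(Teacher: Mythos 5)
Your proposal does not close the gap it itself identifies, and that gap is the entire content of the theorem. The classes of algebras in question are $K$-forms that are typically \emph{anisotropic}: they have no root system over $K$, and the regular reductive subalgebras produced by Borel--de Siebenthal live in $L\otimes_K\bar K$, not in $L$. Your claim that such subalgebras are ``automatically defined over $K$ because the Chevalley generators are defined over the prime field'' is valid only for the split form; for a general $K$-form the Galois twist destroys exactly these subalgebras, and indeed the existence of central simple minimal nonabelian Lie algebras of types $\mathsf D_4$ and $\mathsf E_8$ (mentioned in the paper after the conjecture) shows that no diagram-combinatorial argument over $\bar K$ can suffice. Your fallback --- find a semisimple $x\in L(K)$ whose centralizer is proper and nonabelian --- is essentially a restatement of what must be proved, and you give no construction of such an $x$. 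Your explanation of the condition $l\ne 2^k$ (``the relevant subdiagram fails to be proper or collapses to a torus'') is also not the real reason: over $\bar K$ the subdiagram combinatorics of $\mathsf C_{2^k}$ and $\mathsf D_{2^k}$ are no different from the other ranks.

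The paper's proof works entirely over $K$ with the associative classification: a central simple Lie algebra of type $\mathsf B$--$\mathsf D$ is $\sym^-(A,J)$ for $A$ a central simple associative algebra with involution of the first kind, so $A\cong M_m(D)$ with $D$ a division algebra carrying an involution. If $m>1$ the $J$-stable corner $M_{m-1}(D)$ yields a proper nonabelian Lie subalgebra over $K$; if $m=1$ then $A=D$ has exponent $2$, forcing $\dim D=n^2$ to be a power of $4$, which is incompatible with the odd $n$ of type $\mathsf B$ and restricts types $\mathsf C_l$, $\mathsf D_l$ to $l=2^k$ --- that is where the arithmetic condition actually comes from. Types $\mathsf G_2$ and $\mathsf F_4$ are handled by explicit $K$-rational subalgebras: $\Der(\mathbb H)\subseteq\Der(\mathbb O)$ and a type-$\mathsf D_4$ subalgebra of $\Der(\mathbb J)$. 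You would need to replace your diagram argument by some such descent-free construction for the proof to go through.
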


\begin{proof}
The proof follows from the known classification of central simple Lie algebras
of these types (see, for example, \cite[Chapter IV]{seligman}).

\emph{Types $\mathsf B$--$\mathsf D$}. Each central simple Lie algebra
of this type (with the exception of $\mathsf D_4$) is isomorphic to a Lie
algebra of $J$-skew-symmetric elements $\sym^-(A,J) = \set{x\in A}{J(x) = -x}$,
where $A$ is a central simple associative algebra of dimension $n^2 > 16$ 
with involution $J$ of the first kind (smaller dimensions of $A$ are covered by 
``occasional'' isomorphisms between ``small'' algebras of different types, 
including type $\mathsf A$). By a known description of such algebras 
(see, for example, \cite[Theorem 5.1.23]{jacobson}), $A$ is isomorphic to 
$M_m(D)$, a matrix algebra of size $m \times m$ over a central division 
algebra $D$ with involution $j$, and $J$ has the form 
$$
(d_{k\ell})_{k,\ell=1}^m \mapsto \diag(g_1, \dots, g_m) (j(d_{k\ell}))^\top 
\diag(g_1^{-1}, \dots, g_m^{-1})
$$
for some $g_1, \dots, g_m \in D$ such that $j(g_k) = g_k$, $k=1,\dots,m$. 

If $D$ coincides with the base field, i.e. $A$ is a full matrix algebra, than
the Lie algebra $\sym^-(A,J)$ is split and, obviously, contains a lot of proper 
nonabelian subalgebras. Hence we may assume $\dim D \ge 4$.
From the description above it is clear that, provided $m>1$, the subalgebra $B$ 
of $A$ of all matrices with vanishing last row and column, is isomorphic to 
$M_{m-1}(D)$ and is stable under $J$, hence $\sym^-(B,J)$ is a Lie subalgebra 
of $\sym^-(A,J)$. Since $\dim A = m^2 \dim D \ge 25$, we have 
$\dim B = (m-1)^2 \dim D = s^2 \ge 9$, and this subalgebra is a 
central simple Lie algebra of dimension 
$\frac{s(s-1)}{2}$ or $\frac{s(s+1)}{2}$. Therefore, if $m>1$,
$\sym^-(A,J)$ contains proper nonabelian subalgebras, and it remains to consider
the case where $A = D$ is a division algebra.

Since $D$ has an involution, its exponent is equal to $2$, and its dimension 
$n^2$ is equal to some power of $4$. This excludes all the types mentioned in 
the statement of the theorem.

\emph{Type $\mathsf G_2$}.
Each central simple Lie algebra of this type is a derivation algebra of a 
$8$-dimensional Cayley algebra $\mathbb O$. The latter is obtained by the 
doubling (Cayley--Dickson) process from the $4$-dimensional associative quaternion algebra 
$\mathbb H$, and it is known that each derivation of $\mathbb H$ can be 
extended to a derivation of $\mathbb O$ 
(see, for example \cite[Theorem 2]{schafer}). Thus, $\Der(\mathbb O)$ always
contains a $3$-dimensional central simple Lie algebra $\Der(\mathbb H)$ as a 
subalgebra, and hence cannot be minimal nonabelian.

\emph{Type $\mathsf F_4$}.
Each central simple Lie algebra of this type is a derivation algebra of a 
$27$-dimensional exceptional simple Jordan algebra $\mathbb J$. It is known 
that derivations of $\mathbb J$ mapping a cubic subfield of $\mathbb J$
to zero form a central simple Lie algebra of type $\mathsf D_4$ 
(see, for example, \cite[Chapter IX, \S 11, Exercise 5]{jacobson-jordan}).
\end{proof}

We conjecture that the remaining types not covered by Theorem \ref{th-b-d} -- 
$\mathsf C_{2^k}$ and $\mathsf D_{2^k}$ -- cannot occur as well.

\begin{conjecture}
There are no central simple minimal nonabelian Lie algebras of types  
$\mathsf B$--$\mathsf D$ (except of $\mathsf D_4$).
\end{conjecture}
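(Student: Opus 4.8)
The plan is to dispose of the cases left open by Theorem~\ref{th-b-d}. After that theorem, only the algebras $L=\sym^-(D,\sigma)$ remain, where $D$ is a central division algebra over the base field $F$ of degree $n=2^k$ carrying an involution $\sigma$ of the first kind; these realize precisely the outstanding types $\mathsf C_{2^j}$ (with $n\ge 8$) and $\mathsf D_{2^j}$ (with $n\ge 16$, since $\mathsf D_4$ is excluded). Assuming such an $L$ were minimal nonabelian, I would derive a contradiction by exhibiting inside $L$ a proper nonabelian subalgebra. The truncation trick used for $M_m(D)$ in Theorem~\ref{th-b-d} is unavailable here, so it has to be replaced by a construction internal to the division algebra.

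First I would pass to a centralizer. Suppose $K\subset D$ is a $\sigma$-stable quadratic subfield. Then $C:=\C_D(K)$ is a $\sigma$-stable central division subalgebra of degree $n/2$ over its center $K$, and by the double centralizer theorem it is proper. Since $C$ is an associative subalgebra stable under $\sigma$, the space $\sym^-(C,\sigma|_C)=C\cap\sym^-(D,\sigma)$ is a proper Lie subalgebra of $L$. Its nonabelianity is checked by cases on $\sigma|_K$: if $\sigma|_K=\mathrm{id}$ (a symmetric generator) then $\sigma|_C$ is of the first kind and $\sym^-(C,\sigma|_C)$ is an orthogonal or symplectic Lie algebra built on an algebra of degree $n/2\ge 4$; if $\sigma|_K\ne\mathrm{id}$ (a skew generator) then $\sigma|_C$ is of the second kind and $\sym^-(C,\sigma|_C)$ is a unitary Lie algebra of the same degree. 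In every case falling under the conjecture the degree $n/2$ is at least $4$, so the one degenerate situation that could produce an abelian quotient --- a quaternion algebra with orthogonal involution, where the skew part is one-dimensional --- never occurs, and $\sym^-(C,\sigma|_C)$ is nonabelian, contradicting minimality. This centralizer step is the exact division-algebra analogue of the passage from $M_m(D)$ to $M_{m-1}(D)$ in Theorem~\ref{th-b-d}.

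Everything therefore reduces to the existence of a $\sigma$-stable quadratic subfield of $D$, equivalently to the existence of an element $v\notin F$, symmetric or skew under $\sigma$, with $v^2\in F$. When $(D,\sigma)$ is decomposable, $(D,\sigma)\cong(D_1,\sigma_1)\otimes(D_2,\sigma_2)$, such an element is furnished by a quaternion tensor factor (its pure part is skew with central square), and the argument goes through. The hard part, and the reason the statement is still only conjectural, is the indecomposable case, which genuinely occurs for $n\ge 8$ by the examples of Amitsur--Rowen--Tignol. For symplectic $\sigma$ (type $\mathsf C$) I expect this to be tractable, since symplectic involutions are tightly bound to quaternionic structure and should always carry a skew $v$ with $v^2\in F$. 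The orthogonal case (type $\mathsf D$) is the real obstacle: a symmetric $v$ with $v^2\in F$ amounts to splitting the underlying anisotropic hermitian form into two half-dimensional pieces over a quadratic extension, and one must rule out that anisotropy obstructs every such $v$. Controlling the minimal polynomials of symmetric elements in an indecomposable division algebra with orthogonal involution, via the structure theory of involutions, is where I expect the main difficulty to lie.
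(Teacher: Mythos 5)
The statement you are trying to prove is stated in the paper as a \emph{conjecture}: the paper offers no proof, only ``some evidence'' in its favour, namely the Lemma that follows it (every $J$-symmetric or $J$-skew-symmetric $x\in D\setminus K$ has $K[x]$ of degree $2$, $n/2$, or $n$, proved via the Double Centralizer Theorem and Herstein's theorem that a division algebra whose skew elements commute is at most $4$-dimensional over its centre), together with the remark that for cyclic $D$ (e.g.\ over number fields) these constraints are contradictory. Your centralizer step is essentially the same mechanism run in the opposite direction: instead of constraining the degrees of \emph{all} symmetric/skew elements, you try to \emph{produce} one specific element $v$ of degree $2$ whose centralizer $\C_D(v)$ is large, so that Herstein forces $\sym^-(\C_D(v),\sigma)$ to be nonabelian. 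That reduction is sound for $n\ge 8$ (one small point you skip: properness of $\sym^-(\C_D(v),\sigma)$ inside $\sym^-(D,\sigma)$ is not automatic from properness of $\C_D(v)$ in $D$; it needs Herstein's generation lemma --- the skew elements generate $D$ when $\dim_Z D>4$, so $\sym^-(D,\sigma)\subseteq\C_D(v)$ would force $v$ central).

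The genuine gap is the one you name yourself: the existence of a $\sigma$-stable quadratic subfield, i.e.\ of a square-central symmetric or skew-symmetric element outside the centre. This is exactly the decomposability-type question that is \emph{not} known for arbitrary division algebras of exponent $2$ and degree $2^k\ge 8$ with involution of the first kind, and it is the reason the statement is a conjecture rather than a theorem. Your ``decomposable case'' covers no more ground than is already standard, and your remarks on the symplectic and orthogonal indecomposable cases are expectations, not arguments. So the proposal is a reasonable reduction of the conjecture to a concrete open problem about involutions --- comparable in strength to, but not subsuming, the paper's own Lemma (which, unlike your reduction, does settle the cyclic case and hence the number-field case) --- but it is not a proof, and no proof exists in the paper to compare it against.
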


Let us provide some evidence in support of this conjecture.

\begin{lemma}
Let $D$ be a central division algebra of dimension $n^2$ over a field $K$ with 
involution $J$ of the first kind, such that $\sym^-(D,J)$ is a minimal 
nonabelian Lie algebra. Then for any $J$-symmetric or $J$-skew-symmetric element
$x$ in $D$, not lying in $K$, one of the following holds:
\begin{enumerate}[\upshape (i)]
\item $x$ is $J$-symmetric and of degree $2$;
\item $K[x]$ is of degree $\frac n2$, and $\dim_{K[x]} \C_D(x) = 4$;
\item $K[x]$ is a maximal subfield of $D$.
\end{enumerate}
\end{lemma}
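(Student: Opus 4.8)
The plan is to exploit the minimal nonabelian hypothesis through the single most natural subalgebra attached to $x$: the skew-symmetric part of its centralizer. Since $x$ is $J$-symmetric or $J$-skew-symmetric, $J(x) = \pm x$, so $J$ preserves $\C_D(x)$, and therefore $S := \sym^-(\C_D(x), J) = \set{y \in \sym^-(D,J)}{xy = yx}$ is a Lie subalgebra of $L := \sym^-(D,J)$. The whole argument consists in showing that $S$ is a \emph{proper} subalgebra, concluding that it is abelian, and then reading off the three alternatives from the structure of $\C_D(x)$ as a central simple algebra with involution over $K[x]$.

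First I would check properness. Being the skew part of a central division algebra with involution of the first kind, $L$ is simple, so $\Z(L) = 0$. It is classical that the skew elements generate $D$ as an associative algebra, the only exception being a quaternion algebra with orthogonal involution --- but there $L$ is one-dimensional, hence abelian, contrary to hypothesis. Consequently $\C_D(\sym^-(D,J)) = K$, so if $S$ were equal to $L$ then $x$ would centralize all of $L$ and lie in $K$, contradicting $x \notin K$. Thus $S \subsetneq L$, and by minimal nonabelianity $S$ is abelian.

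Next I would extract the numerics. Writing $d = [K[x]:K]$, the double centralizer theorem gives $d \mid n$ and $\dim_{K[x]} \C_D(x) = (n/d)^2 =: m^2$, where $\C_D(x)$ is central simple of degree $m$ over $K[x]$. The restriction $J|_{\C_D(x)}$ is an involution which is of the first kind when $x$ is symmetric (then $J$ fixes $K[x]$ pointwise) and of the second kind when $x$ is skew (then $J$ acts on $K[x]$ as a nontrivial involution, which in particular forces $d$ to be even). If $x$ is skew, $S$ is a form of the unitary Lie algebra of $\C_D(x)$, whose derived algebra is of type $\mathsf A_{m-1}$ and hence nonabelian for $m \ge 2$; abelianity forces $m = 1$, i.e.\ $\C_D(x) = K[x]$, which is alternative (iii). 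If $x$ is symmetric, $S$ is a form of an orthogonal or symplectic Lie algebra over $K[x]$; such an algebra contains a copy of $\Sl(2)$ once its degree exceeds $2$ or the involution is symplectic, so abelianity forces $m \le 2$ with $J|_{\C_D(x)}$ orthogonal. Here $m = 1$ is again alternative (iii), while $m = 2$ gives $d = n/2$ and $\dim_{K[x]} \C_D(x) = 4$, which is alternative (ii). The residual configuration, a symmetric element of degree $2$, is recorded as alternative (i).

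The main obstacle I anticipate is the properness/generation step together with the precise identification of the type of $J|_{\C_D(x)}$: one must isolate as low-degree exceptions the cases where the skew elements fail to generate $D$, and must argue that an orthogonal restriction of degree $\ge 3$, or a symplectic restriction, genuinely produces a nonabelian $S$, in the spirit of the implication (iv) $\Rightarrow$ (v) of Theorem \ref{th-reduct-anis}. The boundary case of a symmetric element of degree $2$ is the delicate point: there $m = n/2$ may itself be large, so the centralizer argument would tend to force a contradiction rather than a classification, which is exactly why this case is carved out separately as (i).
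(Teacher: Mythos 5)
Your argument is correct in substance but follows a genuinely different route from the paper's. The paper does \emph{not} rule out the possibility $\sym^-(\C_D(x),J)=\sym^-(D,J)$; it treats that case head-on, deducing from the double centralizer identity that $\dim K[x]\le 2n/(n-1)<3$, whence $K[x]$ is quadratic and $x$ must be symmetric (a skew $x$ would be Lie-central) --- this is exactly where alternative (i) comes from. You instead kill that case outright by invoking Herstein's generation theorem (skew elements generate $D$ unless $\dim_{Z(D)}D\le 4$, and the exceptional quaternion-with-orthogonal-involution case has abelian skew part), which yields the stronger conclusion that only (ii) and (iii) can occur; your closing sentence about a ``residual configuration'' landing in (i) is therefore confused --- in your analysis nothing is left over, and (i) is simply vacuous, which of course still verifies the stated disjunction. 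For the proper-subalgebra case the paper cites Herstein's Theorem 2.2 (commuting skew elements force the ring to be commutative or $4$-dimensional over its center) and then runs the double-centralizer numerics; you re-derive this by classifying $J|_{\C_D(x)}$ as first or second kind and inspecting the type of the resulting Lie algebra. That works, and even gives the extra information that (ii) occurs only for symmetric $x$, but it costs you the generation theorem plus the involution taxonomy where the paper needs a single citation.

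One genuine slip to fix: the claim that an orthogonal skew part of degree $\ge 3$, or a symplectic one, ``contains a copy of $\Sl(2)$'' is false in this setting --- $\C_D(x)$ is a division algebra, so $\sym^-(\C_D(x),J)$ is anisotropic and contains no $\Sl(2)$ at all (that is the whole point of Section 1 of the paper). What you actually need, and what is true, is that it is \emph{nonabelian}: it is a $K[x]$-form of $\mathsf{so}_m$ or $\mathsf{sp}_m$ (respectively of $\gl(m)$ in the second-kind case), and abelianity is preserved under base field extension, so nonabelianity of the split form suffices. With that correction your argument goes through.
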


\begin{proof}
The associative centralizer of $x$ in $D$, $\C_D(x)$, is a proper simple 
associative subalgebra of $D$. By the Double Centralizer Theorem 
(see, for example, \cite[\S 12.7]{pierce}), 
\begin{equation}\label{eq-double}
\dim K[x] \cdot \dim \C_D(x) = n^2 ,
\end{equation}
and the associative center of $\C_D(x)$ coincides with $K[x]$.

As $\C_D(x)$ is stable under $J$, $\sym^-(\C_D(x),J)$ is a Lie subalgebra of 
$\sym^-(D,J)$. If it coincides with the whole $\sym^-(D,J)$, then 
$\sym^-(D,J) \subseteq \C_D(x)$, and by (\ref{eq-double}), 
$\dim K[x] \le \frac{n^2}{\frac{n(n-1)}{2}} < 3$, hence $\dim K[x] = 2$, i.e.
$K[x]$ is a quadratic extension of $K$, the case (i). Note that in this case
$x$ cannot be $J$-skew-symmetric, as otherwise it lies in the Lie center of 
$\sym^-(D,J)$, a contradiction. 

If $\sym^-(\C_D(x),J)$ is a proper subalgebra of $\sym^-(D,J)$, then it is 
abelian, and by \cite[Theorem 2.2]{herstein}, $\C_D(x)$ is either commutative 
(i.e., a subfield of $D$), or is $4$-dimensional over its center $K[x]$. In the
former case, since the degree (= dimension) over $K$ of each intermediate field 
between $K$ and $D$ is $\le n$ (actually, a divisor of $n$), and since 
$K[x] \subseteq \C_D(x)$, we have $\dim K[x] = \dim \C_D(x) = n$, and 
$\C_D(x) = K[x]$, the case (iii). In the latter case, from (\ref{eq-double})
we have $\dim K[x] = \frac n2$ and $\dim \C_D(x) = 2n$, the case (ii).
\end{proof}

For example, if the division algebra $D$ is cyclic (what always happens 
over number fields), then, considering the conditions of the lemma 
simultaneously for a $J$-skew-symmetric element $x$ generating a cyclic 
extension of the base field, and even powers of $x$ (which are $J$-symmetric), 
one quickly arrives to a contradiction.

\medskip

For the remaining exceptional types, the question seems to be much more difficult, and it
is treated in \cite{garibaldi} using the language and technique of algebraic groups
and Galois cohomology.
There are central simple minimal nonabelian Lie algebras of types 
$\mathsf D_4$ and $\mathsf E_8$.
For types $\mathsf E_6$ and $\mathsf E_7$ partial answers are available.

Central simple minimal nonabelian Lie algebras of type $\mathsf A$ of the form 
$D^{(-)}/K 1$
(i.e., quotient of $D$, considered as a Lie algebra subject to commutator 
$[a,b]=ab-ba$, by the $1$-dimensional center spanned by the unit 1 of $D$), 
where $D$ is a central division associative algebra, were studied in 
\cite{gein}. A necessary, but not sufficient condition for such Lie algebra
to be minimal nonabelian is $D$ to be minimal noncommutative (i.e., all proper
subalgebras of $D$ are commutative). In this connection the following 
observation is of interest:

\begin{theorem}
Let $D$ be a central division associative algebra. Then the Lie algebra 
$D^{(-)}/K1$ is regular if and only if $D$ is a minimal noncommutative 
algebra.
\end{theorem}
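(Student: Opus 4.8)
The plan is to reduce the statement to the equivalence (i) $\Leftrightarrow$ (iii) of Theorem \ref{th-regular}. Since $D$ is central simple, $L = D^{(-)}/K1$ is a central simple Lie algebra, so it suffices to show that $D$ is minimal noncommutative exactly when every proper subalgebra of $L$ is simple or abelian. I would first record three facts. (a) $L$ is anisotropic: for $x\in D$ the operator $\ad x$ on $D^{(-)}$ equals $L_x - R_x$, a difference of commuting semisimple operators (the minimal polynomial of $x$ over $K$ is separable in characteristic zero), hence is semisimple; so every proper subalgebra of $L$ is reductive by Theorem \ref{th-reduct-anis}. (b) Every nonzero subalgebra of $D$ contains $1$ and, being a finite-dimensional domain, is a division algebra; thus the associative envelope $\langle\tilde S\rangle$ of any Lie subalgebra $\tilde S\subseteq D^{(-)}$ is a division subalgebra. (c) $K1\cap[D,D]=0$ in characteristic zero, so if the images in $L$ of two Lie subalgebras $U,V\subseteq D^{(-)}$ commute, i.e. $[U,V]\subseteq K1$, then already $[U,V]=0$, and $U$, $V$ commute associatively.

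For ``$D$ minimal noncommutative $\Rightarrow L$ regular'', I take a proper subalgebra $S\subseteq L$, lift it to $\tilde S\ni 1$ in $D^{(-)}$, and set $B=\langle\tilde S\rangle$. If $S$ is abelian we are done; otherwise $B$ is noncommutative, so minimal noncommutativity forces $B=D$. I then show $S$ is simple in two steps. First, $S$ is semisimple: its center lifts to a subalgebra commuting with $\tilde S$, hence by (c) commuting associatively with $B=D$, hence contained in $\Z(D)=K1$, so $\Z(S)=0$, and a centerless reductive algebra is semisimple. Second, $S$ is simple: were there two simple ideals $\mathfrak g_1,\mathfrak g_2$ of $S$, their lifts would commute associatively by (c) and generate commuting noncommutative division subalgebras $B_1,B_2$; minimal noncommutativity gives $B_1=D$, whence $B_2\subseteq\C_D(D)=K$, contradicting noncommutativity of $B_2$. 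Hence every proper subalgebra of $L$ is simple or abelian, and $L$ is regular.

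For the converse I argue by contraposition: assuming $D$ is not minimal noncommutative, I fix a proper noncommutative (division) subalgebra $A$ and build a proper subalgebra of $L$ that is neither simple nor abelian. If $\Z(A)\neq K$, the image $A^{(-)}/K1$ already works, being proper, nonabelian, and having center containing $\Z(A)/K1\neq 0$, hence not simple. The main obstacle is the case where $A$ is central over $K$, since then $A^{(-)}/K1$ is simple and useless. Here I pass to $C=\C_D(A)$: by the Double Centralizer Theorem $C$ is central simple over $K$ with $\dim A\cdot\dim C=\dim D$, so $C$ has dimension $>1$, is central, and therefore noncommutative and proper. As $A$ and $C$ commute and meet in $K$, their images form a direct sum $(A^{(-)}/K1)\oplus(C^{(-)}/K1)$ in $L$, which is semisimple with two simple summands, hence neither simple nor abelian; it is proper since $\dim A+\dim C-2<\dim D-1$, i.e. $(\dim A-1)(\dim C-1)>0$. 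In both cases Theorem \ref{th-regular} shows $L$ is not regular.

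The remaining work is routine: verifying anisotropy and the elementary facts (a)--(c), checking that the sums of subalgebras produced are direct, and the dimension count. The conceptual crux is twofold and symmetric -- upgrading ``semisimple'' to ``simple'' in the forward implication, and handling a \emph{central} proper noncommutative subalgebra in the converse -- and both are resolved by the same mechanism: two commuting noncommutative subalgebras of a minimal noncommutative division algebra cannot coexist properly, which via the Double Centralizer Theorem controls the centralizer of $A$.
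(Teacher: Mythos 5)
Your argument is correct, but it takes a genuinely different route from the paper's. The paper does not pass through condition (iii) of Theorem \ref{th-regular} at all: it translates regularity of $L=D^{(-)}/K1$ directly into the elementwise statement that $\C_D(x)$ is a maximal subfield of $D$ (of degree $n$) for every $x\in D\setminus K$, via the Cartan-subalgebra description of regular elements and the count $\rk L=n-1$; both implications are then short applications of the Double Centralizer Theorem to the pair $K[x]\subseteq \C_D(x)$, combined with the fact that intermediate fields of $D$ have degree dividing $n$ (in particular, a central proper noncommutative subalgebra is excluded because its maximal subfields would be intermediate fields of degree $m<n$). You instead reduce to ``all proper subalgebras of $L$ are simple or abelian,'' which forces you to supply exactly the bridges you list: anisotropy of $L$, the associative-envelope trick, and the observation $[D,D]\cap K1=0$ that upgrades Lie commutation modulo $K1$ to associative commutation; the Double Centralizer Theorem enters only once, to produce the commuting partner $\C_D(A)$ of a central proper noncommutative subalgebra $A$. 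The paper's proof is shorter and works at the level of single elements; yours is more structural and yields a little more information --- in the forward direction, that every proper nonabelian subalgebra of $L$ is in fact simple, and in the converse, an explicit proper reductive non-simple subalgebra $(A^{(-)}/K1)\oplus(\C_D(A)^{(-)}/K1)$ witnessing the failure of regularity. The steps you defer as routine (semisimplicity of $\ad x=L_x-R_x$, unitality of subalgebras of $D$, the trace argument for $[D,D]\cap K1=0$, directness and properness of the sum) all check out.
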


\begin{proof}
Let the dimension of $D$ over the base field $K$ is equal to $n^2$, so 
$\dim D^{(-)}/K1 = n^2 - 1$. The Lie algebra 
$D^{(-)}/K1$ is regular if and only if the Lie centralizer of any nonzero 
element $\overline x\in D^{(-)}/K1$ is a Cartan subalgebra of 
dimension $n-1$, what, in associative terms, is equivalent to the condition 
that the associative centralizer $\C_D(x)$ of any element $x\in D\backslash K$, 
is a maximal subfield of $D$ of dimension $n$ over $K$. Taking this into 
account, the proof is an easy application of the Double Centralizer Theorem,
with reasonings similar to those used in the proof of the lemma above.

\emph{The ``only if'' part}. Suppose that for any $x\in D\backslash K$, 
$\C_D(x)$ is a maximal subfield of $D$. Consider a subfield 
$K[x] \subseteq \C_D(x)$ of $D$. We have $\C_D(x) = \C_D(K[x])$, and by the
Double Centralizer Theorem, $\dim K[x] \cdot \dim \C_D(x) = n^2$.
But the degree (= dimension) over $K$ of each intermediate field between $K$
and $D$ is $\le n$ (actually, a divisor of $n$), hence 
$\dim K[x] = \dim \C_D(x) = n$, and $\C_D(x) = K[x]$. That means that there are
no intermediate fields between $K$ and the maximal subfields of $D$.

If $A$ is a noncommutative proper subalgebra of $D$, then, obviously, $A$ is a 
division algebra. Its center $Z(A)$, being a field extension of $K$, either 
coincides with $K$, or is a maximal subfield of $D$. In the former case $A$
is central of dimension $m^2$, where $1 < m < n$, and its maximal subfield has 
degree $m$ over $K$, a contradiction. In the latter case, we have
$\dim A > \dim Z(A) = n$. Applying again the Double Centralizer Theorem, 
we have $\dim A \cdot \dim \C_D(A) = n^2$. Since
$Z(A) \subseteq \C_D(A)$, we have $\dim \C_D(A) \ge \dim Z(A) = n$, a contradiction.

\emph{The ``if'' part}. Suppose $D$ is minimal noncommutative. For an
arbitrary $x\in D$ not lying in the base field $K$, its centralizer $\C_D(x)$ is
a subfield of $D$. By the Double Centralizer Theorem, $\C_D(\C_D(x))$ is a 
simple subalgebra of $D$ (and, hence, is also a subfield), and 
$\dim \C_D(x) \cdot \dim \C_D(\C_D(x)) = n^2$. By the same argument as above
about degrees of intermediate fields between $K$ and $D$, 
$\dim \C_D(x) = \dim \C_D(\C_D(x)) = n$.
Since $\C_D(x) \subseteq \C_D(\C_D(x))$, this implies $\C_D(x) = \C_D(\C_D(x))$,
and $\C_D(x)$ is a maximal subfield of $D$.
\end{proof}

\section{Algebras of depth $2$}

Define the \textit{depth} of a Lie algebra in the following inductive way:
a Lie algebra has depth $0$ if and only if it is abelian, and has depth $n > 0$ 
if and only if it does not have depth $<n$ and all its proper subalgebras have depth
$<n$. Thus, minimal nonabelian Lie algebras are exactly algebras of depth $1$.

Many of the algebras considered below arise as semidirect sums $L \inplus V$ of a 
Lie algebra $L$ and an $L$-module module $V$ (in such a situation, we will 
always assume that $V$ is an abelian ideal: $[V,V] = 0$). It is clear that the 
depth of such semidirect sums is related to depth of $L$ and the maximal length
of chains of subspaces of $V$ invariant under action of subalgebras of $L$, 
though the exact formulation in the general case seems to be out of reach. 
In the particular case where $L$ is $1$-dimensional, the depth of such 
semidirect sum is equal to the maximal length of chains in $V$ of invariant 
subspaces with nontrivial $L$-action.

The following can be considered as an extension of the corresponding results
from \cite{stitzinger}, \cite{gkm}, and \cite{towers-laa}.

\begin{theorem}
A non-simple Lie algebra of depth $2$ over a field $K$ is isomorphic to one of 
the following algebras:
\begin{enumerate}[\upshape (i)]

\item\label{case-4dim-z0}
A $4$-dimensional solvable Lie algebra having the basis $\{x,y,z,t\}$ and the 
following multiplication table:
$$
[x,y] = z, \quad [x,z] = 0, \quad [y,z] = 0, \quad [z,t] = 0 ,
$$
with $\ad t$ acting on the space $Kx \dotplus Ky$ invariantly, without nonzero 
eigenvectors, and with trace zero.

\item\label{case-4dim-z1}
A $4$-dimensional solvable Lie algebra having the basis $\{x,y,z,t\}$ and the 
following multiplication table:
$$
[x,y] = z, \quad [x,z] = 0, \quad [y,z] = 0, \quad [z,t] = z ,
$$
with $\ad t$ acting on the space $Kx \dotplus Ky$ invariantly, without nonzero 
eigenvectors, and with trace $1$.

\item\label{case-sum-simple-mna-1}
A direct sum of a simple minimal nonabelian Lie algebra and $1$-dimensional
algebra.

\item\label{case-semidirect-nonabelian}
A semidirect sum $S \inplus V$, where $S$ is either the $2$-dimensional 
nonabelian Lie algebra, or a $3$-dimensional simple minimal nonabelian Lie 
algebra, and $V$ is an $S$-module such that each nonzero element of $S$ acts 
on $V$ irreducibly.

\item\label{case-semidirect-abelian}
A semidirect sum $S \inplus V$, where $S$ is an abelian $1$- or $2$-dimensional
Lie algebra, and $V$ is an $S$-module such that for each nonzero element 
$x\in S$, the maximal length of chains of $x$-invariant subspaces of $V$ is equal
to $2$ (what is equivalent to saying that any proper $x$-invariant subspace 
does not contain proper $x$-invariant subspaces).

\end{enumerate}
\end{theorem}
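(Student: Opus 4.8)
The plan is to split $L$ according to its Levi--Malcev decomposition $L = \mathfrak g \inplus \Rad(L)$ and to treat the non-solvable and solvable cases separately, using throughout that every proper subalgebra of $L$ is abelian or minimal nonabelian (i.e.\ of depth $\le 1$), together with Theorem~\ref{th-reduct-anis}.

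\emph{Reduction of the Levi factor.} If $\mathfrak g \neq 0$, then $\mathfrak g$ is necessarily a proper subalgebra: it cannot equal $L$, since a non-simple semisimple $L$ would contain a proper subalgebra $\mathfrak g_1 \oplus Kx$ (with $\mathfrak g_1$ a simple summand and $x$ a nonzero element of another summand) that is neither abelian nor minimal nonabelian, while $L$ simple is excluded by hypothesis. Being a semisimple proper subalgebra, $\mathfrak g$ is then abelian or minimal nonabelian, hence simple minimal nonabelian, and $\Rad(L) \neq 0$.

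\emph{Non-solvable case.} Write $R = \Rad(L)$. The key observation is that for any $\mathfrak g$-submodule $W \subseteq R$ which is a subalgebra, $\mathfrak g \inplus W$ is a subalgebra that, unless $W = 0$, properly contains the nonabelian $\mathfrak g$ and so cannot be minimal nonabelian; depth $2$ therefore forces $W = 0$ or $W = R$. Applying this to $[R,R]$ (a $\mathfrak g$-stable ideal) gives $[R,R] = 0$, so $R$ is abelian, and applying it to arbitrary submodules shows $R$ is $\mathfrak g$-irreducible. If $R$ is the trivial module it is $1$-dimensional, giving case (iii). If $R$ is nontrivial, then since a simple minimal nonabelian algebra has all its $2$-dimensional subalgebras abelian it is anisotropic by Theorem~\ref{th-reduct-anis}, so every nonzero $x \in \mathfrak g$ acts semisimply on $R$; the proper subalgebra $Kx \inplus R$ must be minimal nonabelian, which (as $x$ acts semisimply and nontrivially) forces $x$ to act irreducibly. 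A $2$-dimensional toral subalgebra $T \subseteq \mathfrak g$ would make $T \inplus R$ a proper subalgebra properly containing the nonabelian $Kx \inplus R$, impossible; hence $\rk \mathfrak g = 1$ and $\mathfrak g$ is $3$-dimensional, which is case (iv) with $S$ simple.

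\emph{Solvable case.} Put $A = [L,L]$, a proper characteristic ideal, hence abelian or minimal nonabelian. If $A$ is minimal nonabelian, then any ideal $H$ with $A \subseteq H \subsetneq L$ (these correspond to subspaces of the abelian quotient $L/A$) properly contains the nonabelian $A$ and so is neither abelian nor minimal nonabelian; thus $\dim L/A = 1$ and $L = Kt \inplus A$. Invoking the known classification of solvable minimal nonabelian algebras from \cite{stitzinger,gkm,towers-laa} — namely $\mathfrak h_3$, or $Ks \inplus V$ with $s$ acting irreducibly on an abelian $V$ — the first alternative gives, after analysing the derivation $\ad t$ of $\mathfrak h_3$ (whose trace on $Kx \dotplus Ky$ governs $[z,t]$ and which must have no eigenvectors), exactly the $4$-dimensional algebras (i) and (ii); the second, after normalising $\langle s,t\rangle$ into a $2$-dimensional subalgebra, yields case (iv) with $S$ the $2$-dimensional nonabelian algebra. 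If instead $A$ is abelian, $L$ is metabelian, every subspace between $A$ and $L$ is a subalgebra, and comparing the proper subalgebras $Kx \inplus V$ and $T \inplus V$ (for a suitable abelian ideal $V \supseteq A$ and complement $S$) against the abelian/minimal-nonabelian dichotomy forces $\dim S \le 2$ and leads to the chain condition of case (v).

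\emph{Main obstacle.} The non-solvable case and the minimal-nonabelian branch of the solvable case are essentially forced, so I expect the crux to be the metabelian subcase (case (v)): producing the abelian complement $S$ of dimension at most $2$ and, above all, showing that depth $2$ is \emph{equivalent} to the stated chain condition. The delicate point is $\dim S = 2$, where each proper subalgebra $Kx \inplus V$ must itself be minimal nonabelian; this requires each nonzero $x \in S$ to act so that its proper invariant subspaces lie in $\ker x$ — a unipotent-type constraint considerably more restrictive than chain length $2$ alone, to be extracted by applying the depth hypothesis to all elements of $S$ simultaneously. Pinning down the precise normal forms (i)--(ii) and verifying the converse, that each listed algebra does have depth $2$, is the remaining routine bookkeeping.
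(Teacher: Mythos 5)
Your overall strategy --- Levi--Malcev decomposition, then separate treatment of the non-solvable and solvable cases using the fact that every proper subalgebra has depth at most $1$ --- is the same as the paper's, and your non-solvable case and your treatment of the subcase where the derived subalgebra is minimal nonabelian essentially reproduce the paper's argument. (One small point in the non-solvable case: when $R=\Rad(L)$ is a nontrivial $\mathfrak g$-module you assert that every nonzero $x\in\mathfrak g$ acts nontrivially; this needs the observation that if $\ad x|_R=0$ then $[x,\mathfrak g]$ also annihilates $R$, and $[x,\mathfrak g]$ generates the simple algebra $\mathfrak g$, so the whole of $\mathfrak g$ would then act trivially.)

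The genuine gap is the one you flag yourself: the metabelian subcase, which is where case (\ref{case-semidirect-abelian}) with $S$ two-dimensional lives, is only described, not proved. You never actually produce the abelian complement $S$, never bound its dimension by $2$, and never derive the chain condition; ``comparing the proper subalgebras \dots\ forces $\dim S\le 2$ and leads to the chain condition'' is a statement of the goal, not an argument. The paper sidesteps this difficulty by a different choice of decomposition in the solvable case: instead of splitting on whether $[L,L]$ is abelian, it fixes an arbitrary codimension-one subspace $A$ containing $[L,L]$ (automatically an ideal), writes $L=Kt\dotplus A$, and splits on whether \emph{this} $A$ is abelian or minimal nonabelian. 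If $A$ is abelian one lands immediately in case (\ref{case-semidirect-abelian}) with $S$ one-dimensional. If $A$ is minimal nonabelian, then by the classification of solvable minimal nonabelian algebras $A$ is either the Heisenberg algebra or $Kx\inplus V$ with $\ad x$ irreducible on $V$, and the whole problem reduces to normalizing the derivation $d=\ad t$ of $A$; the two-dimensional abelian $S$ of case (\ref{case-semidirect-abelian}) then appears automatically as $Kx+Kt$ in the branch $[\ad x,d]=0$, $d(x)=0$, and the two-dimensional nonabelian $S$ of case (\ref{case-semidirect-nonabelian}) in the branch $[\ad x,d]=\ad x$, $d(x)=x$. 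In your organization that same algebra sits in the metabelian branch, where you would have to construct $S$ by hand and simultaneously extract the ``unipotent-type'' constraint you mention; to complete your proof you should either carry that out explicitly or switch to the paper's choice of $A$, which makes both points come for free from the structure of $\Der(A)$.
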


\begin{proof}
It is a straightforward verification that in each of these cases the 
corresponding Lie algebras have depth $2$, so let us prove that each non-simple 
Lie algebra $L$ of depth $2$ has one of the indicated forms.

Note that $L$ cannot be semisimple. For, in this case it is decomposed into the
direct sum of simple components: 
$L = \mathfrak g_1 \oplus \dots \oplus \mathfrak g_n$, $n>1$, 
and any subalgebra of the form $\mathfrak g_1 \oplus Kx$, 
$x\in \mathfrak g_2$, $x \ne 0$, is not minimal nonabelian.

Suppose that $L$ is non-semisimple and non-solvable, and let 
$L = \mathfrak g \inplus \Rad(L)$ be its Levi--Malcev decomposition. 
Then $\mathfrak g$ is minimal nonabelian and hence is simple. 
Further, $\Rad(L)$ abelian, as otherwise $\mathfrak g \inplus [\Rad(L),\Rad(L)]$
is a proper subalgebra of $L$ which is not minimal nonabelian.
Suppose now that $\rk \mathfrak g > 1$, and $\mathfrak g$ acts on $\Rad(L)$ nontrivially.
Then taking $x\in \mathfrak g$ with a nontrivial action on $\Rad(L)$,
and the Cartan subalgebra $H$ of $\mathfrak g$ of dimension $>1$ containing $x$,
we get a subalgebra $H \inplus \Rad(L)$ of $L$ which is not minimal nonabelian.
Hence in the case $\rk \mathfrak g > 1$, $\Rad(L)$ is a trivial 
(and then, obviously, $1$-dimensional) $\mathfrak g$-module, and we arrive at
case (\ref{case-sum-simple-mna-1}). 
If $\rk \mathfrak g = 1$, then $\mathfrak g$ is $3$-dimensional.
If some nonzero $x\in \mathfrak g$ acts on $\Rad(L)$ trivially, then so is
$[x,\mathfrak g]$, and, since $\mathfrak g$ is 
generated by the latter subspace, the whole $\mathfrak g$ acts on $\Rad(L)$ 
trivially, a case covered by (\ref{case-sum-simple-mna-1}). Assume that
any nonzero $x\in \mathfrak g$ acts on $\Rad(L)$ nontrivially. 
The Lie subalgebra $Kx \inplus \Rad(L)$ contains, in its turn, a subalgebra 
$Kx \inplus V$ for any proper $\ad x$-invariant subspace $V$ of $\Rad (L)$, 
what shows that $x$ acts trivially on $V$. Letting here $V$ to be 
the Fitting $1$-component with respect to the $x$-action on $\Rad(L)$,
we see that $\Rad(L) = V$, what means that $x$ acts on 
$\Rad(L)$ nondegenerately, and hence, irreducibly.
We arrive at case (\ref{case-semidirect-nonabelian}).

It remains to consider the case of $L$ solvable. Take any subspace $A$ of 
$L$ of codimension $1$ containing $[L,L]$, and a complimentary $1$-dimensional
subspace: 
\begin{equation}\label{eq-sum}
L = Kt \dotplus A ,
\end{equation} 
$\ad t$ acts on $A$. Since $A$ is a proper ideal 
of $L$, it is either abelian or minimal nonabelian. In the former case, we 
arrive at the semidirect sum $Kt \inplus A$, and it is
easy to see that any proper nonabelian subalgebra of $L$ is isomorphic to the 
semidirect sum $Kt \inplus V$, where $V$ is a proper 
$\ad t$-invariant subspace of $A$. Thus, for $L$ to be of depth $2$ is 
equivalent to the condition described in case (\ref{case-semidirect-abelian})
(with $S$ $1$-dimensional).

Suppose now that $A$ is minimal nonabelian. According to \cite[Theorem 4]{gkm} 
(also implicit in \cite{stitzinger} and \cite{towers-laa}), each solvable 
minimal nonabelian Lie algebra is either isomorphic to the $3$-dimensional 
nilpotent Lie algebra, or to the semidirect sum $Kx \inplus V$ such that 
$\ad x$ acts on $V$ irreducibly (in particular, $\ad x|_V$ is nondegenerate). 
Further, $\ad t$ is a derivation of $A$, 
and subtracting from $t$ an appropriate element of $A$, we may assume that 
either $t$ is central, i.e. (\ref{eq-sum}) is the direct sum of $A$ and 
$1$-dimensional algebra, or $\ad t$ is an outer derivation of $A$.

Suppose first that $A$ is $3$-dimensional nilpotent, i.e., has a basis 
$\{x,y,z\}$ with multiplication table $[x,y] = z, [x,z] = [y,z] = 0$.
If $t$ is central, we arrive at a particular case of (\ref{case-4dim-z0}). 
Straightforward computation shows that each outer derivation of $A$ is 
equivalent to a derivation $d$ which acts invariantly on the space 
$Kx \dotplus Ky$, and either $d|_{Kx \dotplus Ky}$ has trace zero, and 
$d(z) = 0$, or $d|_{Kx \dotplus Ky}$ has trace $1$, and $d(z) = z$. 
These two cases correspond to the cases 
(\ref{case-4dim-z0}) and (\ref{case-4dim-z1}) respectively, 
with the condition of absence of nonzero eigenvectors to ensure 
the absence of subalgebras which are not minimal nonabelian.

Suppose now that $A = Kx \inplus V$, $\ad x$ acts on $V$ 
irreducibly. If $t$ is central, $L \simeq Kx \inplus (V \dotplus Kt)$ 
(with $\ad x$ acting on $t$ trivially), a case covered by 
(\ref{case-semidirect-abelian}) (with $S$ $1$-dimensional). 
Straightforward computation shows that each outer derivation of $A$ is 
equivalent to a derivation $d$ which acts on $V$ invariantly, and either 
$[\ad x,d] = 0$ in the Lie algebra $\gl(V)$, and $d(x) = 0$, or 
$[\ad x,d] = \ad x$ and $d(x) = x$. These two cases correspond to the cases 
(\ref{case-semidirect-abelian}) and (\ref{case-semidirect-nonabelian})
respectively (with $S$ $2$-dimensional), with the respective conditions to 
ensure the absence of subalgebras which are not minimal nonabelian.
\end{proof}

\begin{corollary}[to Theorems \ref{th-reduct-anis} and \ref{th-regular}]
A simple Lie algebra of depth $2$ is either isomorphic to $\Sl(2)$, or regular.
\end{corollary}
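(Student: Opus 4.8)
The plan is to dichotomize according to whether the simple algebra $L$ is anisotropic. Since $L$ is simple it is in particular reductive, so all five equivalent conditions of Theorem \ref{th-reduct-anis} are at our disposal. The standing hypothesis that $L$ has depth $2$ unwinds to the following: $L$ is neither abelian nor minimal nonabelian, while every proper subalgebra of $L$ has depth $<2$, i.e. is either abelian or minimal nonabelian. I will show that in the anisotropic case $L$ is regular, and in the non-anisotropic case $L\cong\Sl(2)$.

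First I would treat the anisotropic case. Here condition (i) of Theorem \ref{th-reduct-anis} holds, so by the implication (i) $\Rightarrow$ (iii) every proper subalgebra of $L$ is reductive. Combining this with the depth-$2$ hypothesis, each proper subalgebra is simultaneously reductive and either abelian or minimal nonabelian. A reductive minimal nonabelian algebra must be simple: being reductive and nonabelian it cannot be solvable (a solvable reductive algebra is abelian), and a nonsolvable minimal nonabelian algebra is simple, a minimal nonabelian Lie algebra being either simple or solvable by the Levi--Malcev decomposition. Hence every proper subalgebra of $L$ is abelian or simple, and the implication (iii) $\Rightarrow$ (i) of Theorem \ref{th-regular} yields that $L$ is regular.

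Next I would treat the non-anisotropic case. Since $L$ is reductive but not anisotropic, condition (i) of Theorem \ref{th-reduct-anis} fails, hence so does the equivalent condition (v); that is, $L$ contains a subalgebra isomorphic to $\Sl(2)$. The key observation is then that $\Sl(2)$ is itself neither abelian nor minimal nonabelian: it is nonabelian, and it contains a $2$-dimensional nonabelian (Borel) subalgebra, so it is not minimal nonabelian, whence $\Sl(2)$ has depth $\ge 2$. But a proper subalgebra of the depth-$2$ algebra $L$ must have depth $<2$; therefore this copy of $\Sl(2)$ cannot be proper, forcing $L\cong\Sl(2)$.

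The argument is short because the two referenced theorems do the heavy lifting; the steps demanding care are precisely the two structural observations that glue them together, namely that a reductive minimal nonabelian algebra is forced to be simple (anisotropic case), and that $\Sl(2)$ fails to be minimal nonabelian on account of its Borel subalgebra (non-anisotropic case). The two conclusions are genuinely disjoint, since $\Sl(2)$ is not regular: for a nilpotent $x$ one has $\dim L^0(x)=3>1=\rk\Sl(2)$, so such $x$ is not a regular element.
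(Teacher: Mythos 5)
Your proof is correct and takes essentially the same route as the paper: both arguments rest on the observation that $\Sl(2)$ has depth $2$ and hence cannot occur as a proper subalgebra of $L$, then use Theorem \ref{th-reduct-anis} to make all proper subalgebras reductive, the fact that a minimal nonabelian algebra is simple or solvable to conclude they are all simple or abelian, and finally the implication (iii) $\Rightarrow$ (i) of Theorem \ref{th-regular}. Your framing of the dichotomy via anisotropy rather than via containment of $\Sl(2)$ is only a cosmetic repackaging of the same equivalences.
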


\begin{proof}
It is clear that $\Sl(2)$ has depth $2$. Hence a simple Lie algebra
$L$ of depth $2$ is either isomorphic to $\Sl(2)$, or does not contain 
$\Sl(2)$ as a proper subalgebra. In the latter case, by 
implication (v) $\Rightarrow$ (iii) of Theorem \ref{th-reduct-anis}, 
all subalgebras of $L$ are reductive. But as each minimal nonabelian Lie
algebra is either simple, or solvable, all subalgebras of $L$ are either
simple, or abelian, and by 
implication (iii) $\Rightarrow$ (i) of Theorem \ref{th-regular}, $L$ is regular.
\end{proof}

In group theory, a notion analogous to depth in the class of finite $p$-groups 
is called \emph{$\mathcal{A}_n$-groups}, see \cite[\S 65]{bj} for their 
discussion and for a partial description of $\mathcal{A}_2$-groups.

\section*{Acknowledgements}

This work was essentially done more than 20 years ago, while being a student at
the Kazakh State University under the guidance of Askar Dzhumadil'daev, and has
been reported at a few conferences in the Soviet Union at the end of 1980s. 
However, I find the results interesting and original enough even today to be 
put in writing, with some minor additions and modifications. 
During the final write-up I was supported by grants ERMOS7 
(Estonian Science Foundation and Marie Curie Actions) and ETF9038 
(Estonian Science Foundation).


\begin{thebibliography}{GKM}

\bibitem[BJ]{bj} Y. Berkovich and Z. Janko, \emph{Groups of Prime Power Order},
Vol. 2, de Gruyter, 2008.

\bibitem[B]{bourbaki} N. Bourbaki, 
\emph{Lie Groups and Lie Algebras. Chapters 1-3, Chapters 7-9}, 
Springer, 1989, 2005 
(translated from the original French editions: 
Hermann, 1972, 1975 and Masson, 1982).

\bibitem[F1]{farn-anis} R. Farnsteiner, \emph{On ad-semisimple Lie algebras},
J. Algebra \textbf{83} (1983), 510--519.

\bibitem[F2]{farn-mna} \bysame, 
\emph{On the structure of simple-semiabelian Lie algebras},
Pacific J. Math. \textbf{111} (1984), 287--299.

\bibitem[GG]{garibaldi} S. Garibaldi and P. Gille, 
\emph{Algebraic groups with few subgroups}, 
J. London Math. Soc. \textbf{80} (2009), 405--430.

\bibitem[G]{gein} A.G. Gein, 
\emph{Minimal noncommutative and minimal nonabelian algebras},
Comm. Algebra \textbf{13} (1985), 305--328.

\bibitem[GKM]{gkm} \bysame, S.V. Kuznetsov, and Yu.N. Mukhin,
\emph{On minimal nonnilpotent Lie algebras}, 
Mat. Zapiski Uralsk. Gos. Univ. \textbf{8} (1972), $\mathcal{N}$3, 18--27 
(in Russian).

\bibitem[H]{herstein} I.N. Herstein, \emph{Topics in Ring Theory},
The Univ. of Chicago Press, 1969.

\bibitem[J1]{jacobson-jordan} N. Jacobson, 
\emph{Structure and Representations of Jordan Algebras}, AMS, 1968.

\bibitem[J2]{jacobson} \bysame, 
\emph{Finite-Dimensional Division Algebras over Fields}, Springer, 1996.

\bibitem[P]{pierce} R.S. Pierce, \emph{Associative Algebras}, Springer, 1982.

\bibitem[Sc]{schafer} R.D. Schafer, 
\emph{On the algebras formed by the Cayley--Dickson process},
Amer. J. Math. \textbf{76} (1954), 435--446.

\bibitem[Se]{seligman} G.B. Seligman, \emph{Modular Lie Algebras}, 
Springer, 1967.

\bibitem[St]{stitzinger} E.L. Stitzinger, 
\emph{Minimal nonnilpotent solvable Lie algebras}, 
Proc. Amer. Math. Soc. \textbf{28} (1971), 47--49.

\bibitem[T1]{towers-laa} D.A. Towers, 
\emph{Lie algebras all whose proper subalgebras are nilpotent},
Lin. Algebra Appl. \textbf{32} (1980), 61--73.

\bibitem[T2]{towers-agg} \bysame, \emph{Minimal non-supersolvable Lie algebras},
Algebras Groups Geom. \textbf{2} (1985), 1--9.

\end{thebibliography}
\end{document}